\documentclass[11pt,a4paper]{article}
\usepackage[letterpaper,top=2.7cm,bottom=2.7cm,left=2.5cm,right=2.5cm,marginparwidth=1.75cm]{geometry}
\DeclareMathAlphabet{\mathpzc}{OT1}{pzc}{m}{it}
\usepackage{amsmath,amsfonts,amsthm,amssymb}
\usepackage{amsmath,amssymb,color}
\usepackage{graphicx,fancyhdr}
\usepackage{mathrsfs,float}
\usepackage{multirow}
\usepackage{dsfont}
\usepackage{subfigure}
\usepackage{booktabs}
\usepackage{yfonts}
\usepackage{natbib}
\usepackage{authblk}
\usepackage{caption}
\usepackage{geometry}
\usepackage{bbm}
\makeatletter
\usepackage{hyperref}

\numberwithin{equation}{section}
\newtheorem{thm}{Theorem}[section]
\newtheorem{rem}[thm]{Remark}

\newtheorem{lem}[thm]{Lemma}

\title{Uniqueness of an Inverse Coefficient Problem for a Time-Fractional Damped Wave Equation from Boundary Measurements}
\author[a]{Kai Yu \thanks{Corresponding author: yukaimailbox@163.com}}
\affil[a]{School of Mathematics and Statistics, Ningbo University, Ningbo 315211, China}

\date{}

\begin{document}
\maketitle

\begin{abstract}

This paper studies an inverse coefficient problem for a time-fractional damped wave equation on a finite time interval. The aim is to determine two spatially varying coefficients, namely the fractional damping coefficient and the zeroth-order potential, from the associated Dirichlet-to-Neumann (DtN) map. We first prove the well-posedness of the forward problem for boundary data with sufficient regularity to admit a suitable lifting. The main result is a uniqueness theorem showing that if two coefficient pairs give rise to the same DtN map, then the corresponding coefficients coincide almost everywhere in the domain. The proof is based on a convolution-type integral identity that eliminates the unknown boundary traces, the construction of high-frequency beam solutions adapted to the singular kernel of the Caputo derivative, and a detailed asymptotic analysis of the resulting fractional convolution terms. This result extends classical uniqueness results for hyperbolic inverse problems to a fractional-order model with damping and provides a theoretical basis for related applications involving memory and viscoelastic effects.

\medskip
\noindent \textbf{Keywords:} Inverse coefficient problem, time-fractional damped wave equation, Dirichlet-to-Neumann map, uniqueness, beam solutions.

\medskip
\noindent \textbf{Mathematics Subject Classifications 2020:} 35R30, 35L05, 35R11.
\end{abstract}


\section{Introduction}

Inverse coefficient problems for partial differential equations concern the determination of spatially varying parameters, such as conductivity, density, damping, or absorption coefficients, from boundary measurements, typically encoded in the DtN map. Such problems have attracted extensive attention due to their important applications in non-destructive testing, medical imaging, geophysics, and material science~\cite{Isakov2006,KK2015,Uhlmann2009,Z2002}. The study of this class of problems goes back to Calderón's seminal work~\cite{Calderon1980} on the inverse conductivity problem, now known as the Calderón problem. For elliptic equations, Sylvester and Uhlmann~\cite{SU1987} established global uniqueness by constructing complex geometric optics (CGO) solutions, while Alessandrini~\cite{Alessandrini1988} obtained conditional logarithmic stability estimates. For hyperbolic equations, global uniqueness results were proved in~\cite{Bukhgeim1981} by means of Carleman estimates. Rakesh and Symes~\cite{Rakesh1988} studied the recovery of time-dependent potentials in wave equations from the DtN map and proved uniqueness using beam-type solutions adapted to the hyperbolic setting. These foundational works have played a central role in the development of inverse problems for hyperbolic equations, which are closely related to imaging in elastic media and non-destructive evaluation.

Classical models based on integer-order partial differential equations, however, may be insufficient for describing memory and viscoelastic effects observed in complex media such as biological tissues, polymers, and composite materials. For wave propagation in such media, power-law frequency attenuation is often observed experimentally, indicating the presence of temporal nonlocality that can be described by fractional derivatives or memory kernels~\cite{CH2003}. This motivates the use of time-fractional damping, where the Caputo derivative $\partial_t^\alpha$, $\alpha\in(0,1)$, provides an effective model for power-law attenuation and memory effects~\cite{Mai2022,Subdiffusive2000,Metzler2000}. The resulting nonlocal term substantially changes the analytic structure of the governing equation and makes many classical methods difficult to apply directly. Motivated by these considerations, we study the following initial-boundary value problem for a time-fractional damped wave equation posed on a bounded domain $\Omega\subset\mathbb{R}^n$ with smooth boundary $\partial\Omega$:
\begin{equation}\label{eq1.1}
\left\{\begin{array}{ll}
\partial_{t}^{2} u(x, t)-\Delta u(x, t)+p(x)\partial_{t}^{\alpha}u(x,t)+q(x)u(x,t)=0, & (x,t) \in Q,   \\
u(x, t)=g(x,t), &  (x, t)\in\Sigma,\\
u(x, 0)=0,~\partial_t u(x, 0)=0, & x \in {\Omega}, 
\end{array}\right.
\end{equation}
where $Q:=\Omega\times(0,T)$ and $\Sigma:=\partial\Omega\times(0,T)$ denote the space-time cylinder and its lateral boundary, respectively. Here $\partial_t^\alpha$ is the Caputo fractional derivative of order $\alpha\in(0,1)$ with respect to $t$, whose definition is recalled in Section~\ref{sec2}. The coefficients $p(x)$ and $q(x)$ represent, respectively, the fractional damping coefficient and the acoustic absorption potential. The inverse problem considered in this paper is to determine these two spatially dependent coefficients from the corresponding DtN map.

The equation \eqref{eq1.1} features both a time-fractional derivative, modeling memory and damping effects, and a hyperbolic (wave-type) spatial operator, thereby linking it to two active lines of research: inverse problems for fractional evolution equations and inverse problems for hyperbolic equations. Inverse problems for fractional evolution equations have received increasing attention in recent years~\cite{Cheng2011,Jin2015,Li2015,Li2019,Miller2013}. In the hyperbolic setting, the classical wave equation, corresponding formally to the case without fractional damping, and the standard damped wave equation have been studied extensively in~\cite{Rakesh1988,Bel1987,Kli1992,KL2019,Sakamoto2011,Kian2021} and~\cite{Isa1991,Rom2020,Kian2016}, respectively. For hyperbolic equations with memory terms, Bukhgeim, Dyatlov and Isakov~\cite{Bukhgeim1997} used the Fourier transform in time to obtain logarithmic stability estimates on an infinite time interval. This uniqueness result was later extended to finite time intervals~\cite{Dya2003} and to partial boundary measurements~\cite{BDU2001}. Related identification problems for memory kernels were further investigated in~\cite{JW2001,Col2007,DS2015}. More recently, Kaltenbacher and Rundell~\cite{Kal2022} studied fractional wave equations and clarified the interaction between the fractional order and the damping profile, while the monograph~\cite{Sel2016} provides a comprehensive overview of fractional models and related inverse problems.

In this paper, we develop a geometric-optics approach adapted to the singular kernel of the Caputo derivative. We prove that the spatial coefficients $p(x)$ and $q(x)$ in \eqref{eq1.1} are uniquely determined by the DtN map on a finite time interval, provided that $T$ larger than the diameter of $\Omega$, and that the coefficients belong to $L^\infty(\Omega)$. The proof begins with a well-posedness result for the forward problem. For sufficiently smooth boundary data, a lifting argument reduces the problem to one with homogeneous boundary condition and a suitable volume source term. We then derive an integral identity from the equality of the DtN maps, which eliminates unknown boundary traces and involves only volume integrals containing the differences of the coefficients.

The main part of the proof is the construction of high-frequency beam solutions adapted to the fractional operator. More precisely, oscillatory plane-wave type functions multiplied by suitable cut-off functions are shown to approximate exact solutions, with an $L^2(Q)$ remainder of order $\sigma^{\alpha-1}$ as the large parameter $\sigma\to\infty$. The estimates rely on a careful treatment of the singular kernel
\[
k_\alpha(t)=\frac{t^{-\alpha}}{\Gamma(1-\alpha)}
\]
appearing in the Caputo derivative, together with an appropriate splitting of the time convolution and integration-by-parts arguments for the oscillatory terms. Substituting these special solutions into the integral identity and analyzing the leading-order asymptotics yield algebraic relations involving the Fourier transforms of the coefficient differences. Letting the asymptotic parameter tend to infinity shows that these Fourier transforms vanish identically. Consequently, by the Fourier uniqueness theorem, the coefficient differences vanish almost everywhere in $\Omega$, which proves the desired uniqueness result.

The paper is organized as follows. Section~\ref{sec2} introduces the functional setting and notation, recalls basic properties of fractional derivatives, presents the main results, and proves the well-posedness theorem for the forward problem. It also formulates the uniqueness theorem for the inverse problem. Section~\ref{sec3} is devoted to the proof of the uniqueness theorem, based on an integral identity, the construction of high-frequency beam solutions, and the asymptotic analysis of the associated fractional convolution terms. Finally, Section~\ref{sec4} contains concluding remarks and discusses possible directions for future work.


\section{Preliminaries and main results}\label{sec2}

We begin by fixing the notation and recalling several basic properties of fractional derivatives that will be used throughout the paper. We then formulate the forward problem, prove its well-posedness, define the Dirichlet-to-Neumann map, and finally state the main uniqueness result.

\subsection*{Preliminaries}

We first introduce some notation and function spaces. The $L^2(\Omega)$ inner product is denoted by $(\cdot,\cdot)$, and the associated norm by $\|\cdot\|$. We write \(L^2(Q):=L^2(0,T;L^2(\Omega))\),
and denote by $C([0,T];X)$ the space of continuous functions from $[0,T]$ into a Banach space $X$. The space $H_0^1(\Omega)$ is equipped with the equivalent norm $\|\nabla\cdot\|_{L^2(\Omega)}$, which is equivalent to the standard $H^1$ norm by Poincar\'e's inequality. Its dual space is denoted by $H^{-1}(\Omega)$. For standard facts on Sobolev spaces, we refer to~\cite{AF2003}.

Convolution in time is denoted by $*$. For measurable functions $f,g:(0,T)\to\mathbb{R}$, we set
\[
(f*g)(t):=\int_0^t f(t-\tau)g(\tau)\,d\tau,
\qquad t\in(0,T),
\]
whenever the integral is well defined. In particular, if $f\in L^1(0,T)$ and $g\in L^2(0,T)$, then Young's inequality gives
\[
\|f*g\|_{L^2(0,T)}
\leq
\|f\|_{L^1(0,T)}\|g\|_{L^2(0,T)}.
\]
For space-time functions, convolution is always understood with respect to the time variable; that is,
\[
(f*g)(x,t):=\int_0^t f(x,t-\tau)g(x,\tau)\,d\tau
\]
whenever the expression is meaningful.

For $\alpha\in(0,1)$, the Caputo fractional derivative and the Riemann--Liouville fractional integral of order $1-\alpha$ are defined respectively by
\[
\partial_t^\alpha u(t)
=
\frac{1}{\Gamma(1-\alpha)}
\int_0^t
\frac{u'(\tau)}{(t-\tau)^\alpha}\,d\tau,
\qquad
I^{1-\alpha}u(t)
=
\frac{1}{\Gamma(1-\alpha)}
\int_0^t
\frac{u(\tau)}{(t-\tau)^\alpha}\,d\tau,
\]
whenever the integrals are well defined. Setting \(k_\alpha(t):=\frac{t^{-\alpha}}{\Gamma(1-\alpha)}
\), we have the convolution representations
\[
\partial_t^\alpha u=k_\alpha*u',
\qquad
I^{1-\alpha}u=k_\alpha*u.
\]
In particular, if $u\in H^1(0,T)$ and $u(0)=0$, then the Caputo derivative coincides with the corresponding Riemann--Liouville fractional derivative.

The kernel $k_\alpha$ is positive, decreasing, and belongs to $L^1(0,T)$. Moreover,
\[
\|k_\alpha\|_{L^1(0,T)}
=
\frac{T^{1-\alpha}}{\Gamma(2-\alpha)}.
\]
Consequently, Young's convolution inequality implies that $I^{1-\alpha}$ is bounded on $L^2(0,T)$. In particular, for every $u\in H^1(0,T)$ with $u(0)=0$,
\[
\|\partial_t^\alpha u\|_{L^2(0,T)}
\leq
\|k_\alpha\|_{L^1(0,T)}
\|\partial_t u\|_{L^2(0,T)}.
\]

\subsection*{Well-posedness of the forward problem}

We now formulate the forward initial-boundary value problem \eqref{eq1.1}, where \(p,q\in L^\infty(\Omega)\). The boundary data \(g(x,t)\) are taken from
\[
\mathcal{B}
:=
L^2(0,T;H^{3/2}(\partial\Omega))
\cap
H^2(0,T;H^{1/2}(\partial\Omega)),
\]
and we impose the compatibility conditions
\begin{equation}\label{eq com con}
g(\cdot,0)=\partial_t g(\cdot,0)=0
\quad \text{on } \partial\Omega.    
\end{equation}
This choice of $\mathcal{B}$ ensures the existence of a lifting with sufficient spatial and temporal regularity, allowing us to reduce \eqref{eq1.1} to a homogeneous boundary value problem with a volume source term in $L^2(Q)$.  With the functional framework and the assumptions on the data in place, we can now state the well‑posedness result for the forward problem. 

\begin{thm}[Well-posedness]\label{thm wellp}
Let $p,q\in L^\infty(\Omega)$ and $g\in\mathcal{B}$ satisfy the compatibility conditions \eqref{eq com con}. Then problem \eqref{eq1.1} admits a unique solution
\[
u\in C([0,T];H^1(\Omega))
\cap
C^1([0,T];L^2(\Omega)).
\]
Moreover, there exists a constant $C>0$, depending only on $\Omega$, $T$, $\alpha$, $\|p\|_{L^\infty(\Omega)}$, and $\|q\|_{L^\infty(\Omega)}$, such that
\[
\|u\|_{C([0,T];H^1(\Omega))}
+
\|\partial_t u\|_{C([0,T];L^2(\Omega))}
\leq
C\|g\|_{\mathcal{B}}.
\]
\end{thm}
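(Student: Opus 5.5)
The plan is to reduce \eqref{eq1.1} to a problem with homogeneous Dirichlet data by a lifting, and then solve the reduced problem with a Galerkin scheme driven by an energy estimate in which the Caputo term is treated as a bounded memory perturbation.

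\emph{Lifting.} For a.e. $t$, let $G(\cdot,t)$ be the harmonic extension of $g(\cdot,t)$, that is, $\Delta G=0$ in $\Omega$ and $G=g$ on $\partial\Omega$. Elliptic regularity gives $\|G(t)\|_{H^2(\Omega)}\le C\|g(t)\|_{H^{3/2}(\partial\Omega)}$ and $\|\partial_t^jG(t)\|_{H^1(\Omega)}\le C\|\partial_t^jg(t)\|_{H^{1/2}(\partial\Omega)}$, because the extension commutes with $\partial_t$. Hence
\[
G\in L^2(0,T;H^2(\Omega))\cap H^2(0,T;H^1(\Omega)),
\]
and by \eqref{eq com con} we have $G(\cdot,0)=\partial_tG(\cdot,0)=0$. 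In particular $G\in C^1([0,T];H^1(\Omega))$ by the Sobolev embedding in $t$. Writing $u=v+G$, the function $v$ must solve
\[
\partial_t^2v-\Delta v+p\,\partial_t^\alpha v+qv=F,\qquad F:=-\partial_t^2G-p\,\partial_t^\alpha G-qG,
\]
with $v=0$ on $\Sigma$ and zero initial data. The preliminaries on $k_\alpha$ and Young's inequality give $\|F\|_{L^2(Q)}\le C\|g\|_{\mathcal B}$. Notice that $\Delta G=0$, so only the $H^1$-in-space regularity of $G$ is actually used here, but the $H^{3/2}$ assumption keeps the lifting clean.

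\emph{Galerkin approximation and energy estimate.} Take the Dirichlet eigenfunctions $\{\varphi_k\}$ of $-\Delta$ and look for $v_m=\sum_{k\le m}c_k(t)\varphi_k$ solving the projected system. This is a linear system of Volterra integro-differential equations in $c_k$ with weakly singular $L^1$ kernel $k_\alpha$. Written as a second-order system, it has a unique solution $c_k\in H^2(0,T)$ by the standard fixed-point argument for Volterra equations with integrable kernel. Testing with $\partial_tv_m$ gives, with $E_m(t)=\|\partial_tv_m(t)\|^2+\|\nabla v_m(t)\|^2$,
\[
\tfrac12 E_m(t)=\int_0^t\bigl(F-p\,k_\alpha*\partial_tv_m-qv_m,\ \partial_tv_m\bigr)\,ds.
\]
The memory term is controlled by Young's inequality on $(0,t)$:
\[
\int_0^t\|k_\alpha*\partial_tv_m\|\,\|\partial_tv_m\|\,ds\le\|k_\alpha\|_{L^1(0,T)}\int_0^t\|\partial_tv_m\|^2ds.
\]
No positivity of the Caputo operator is needed, since $p$ has no sign. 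The term $qv_m$ is handled with Poincaré's inequality. Gronwall then yields $\sup_tE_m(t)\le C\|F\|_{L^2(Q)}^2$, uniformly in $m$.

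\emph{Passage to the limit, regularity, and uniqueness.} Weak-$*$ compactness gives a limit $v\in L^\infty(0,T;H_0^1)\cap W^{1,\infty}(0,T;L^2)$ with $\partial_t^2v\in L^2(0,T;H^{-1})$. The linear memory term passes to the limit because $k_\alpha*{}$ is weakly continuous on $L^2(0,T;L^2(\Omega))$. The main obstacle is to upgrade this to $v\in C([0,T];H^1_0)\cap C^1([0,T];L^2)$ and to obtain the energy equality for the limit. My plan is to move the damping to the right-hand side, writing $\tilde F=F-p\,\partial_t^\alpha v-qv\in L^2(Q)$, so that $v$ becomes the weak solution of the standard wave equation with an $L^2$ source. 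The classical Lions--Magenes theory then gives the continuity and the energy identity. Uniqueness follows by applying that same energy identity to the difference of two solutions, which solves the problem with $F=0$; the same Gronwall argument forces it to vanish. Finally, $u=v+G$ inherits the regularity, and the estimate follows from $\|G\|_{C([0,T];H^1)}+\|\partial_tG\|_{C([0,T];L^2)}\le C\|g\|_{\mathcal B}$.
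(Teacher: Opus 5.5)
Your proof is correct, but it follows a genuinely different route from the paper's at both stages.

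\textbf{Lifting.} The paper uses an abstract trace/extension theorem to obtain $\tilde g\in L^2(0,T;H^2(\Omega))\cap H^2(0,T;H^1(\Omega))$, and keeps $\Delta\tilde g$ in the source. This is where the $L^2(0,T;H^{3/2}(\partial\Omega))$ part of $\mathcal{B}$ enters the paper's proof. Your harmonic extension is a fixed bounded operator that commutes with $\partial_t$. The compatibility conditions therefore transfer automatically, and $\Delta G=0$ removes that term from the source. As a result, your bound actually holds with $\|g\|_{H^2(0,T;H^{1/2}(\partial\Omega))}$ on the right-hand side.

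\textbf{Reduced problem.} The paper proves nothing itself here. It cites \cite[Theorem~1.1]{HK2023} for existence, uniqueness, regularity and the bound by $\|F\|_{L^2(Q)}$. You establish all of this directly, in three steps.
\begin{itemize}
\item A Galerkin approximation.
\item An energy estimate in which $p\,k_\alpha*\partial_t v$ is treated as a causal perturbation, bounded on $L^2(0,t;L^2(\Omega))$ by Young's inequality. This makes it transparent why neither a sign condition on $p$ nor positivity of the Caputo operator is needed.
\item Continuity in time and the energy identity, obtained from the Lions--Magenes theory for the classical wave equation after moving the lower-order terms into an $L^2(Q)$ source.
\end{itemize}

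\textbf{What each route buys.} The paper's version is shorter, but it does not verify that the cited result applies. Yours is self-contained. As a by-product, it proves explicitly the homogeneous-boundary estimate with an $L^2(Q)$ source. The paper later invokes that estimate in Step~2 of the proof of Lemma~\ref{lem cgo}, under the name of Theorem~\ref{thm wellp}, even though that theorem is stated only for boundary data.

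The details you leave implicit are standard. One is a uniform bound for $\partial_t^2 v_m$ in $L^2(0,T;H^{-1}(\Omega))$, which follows from the $H_0^1$-stability of the spectral projection. The other is recovering the initial conditions in the limit.
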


\begin{proof}[\bf{Proof of Theorem \ref{thm wellp}}]

Since $g\in \mathcal{B}$ with $g(\cdot,0)=\partial_t g(\cdot,0)=0$, by standard trace and extension theorems for Sobolev spaces
(see \cite{Lions1972}), there exists a lifting function $\tilde{g} = \mathcal{R}g$ such that
\[
\tilde{g}|_{\partial\Omega\times(0,T)} = g,\quad \tilde{g}(\cdot,0) = \partial_t \tilde{g}(\cdot,0) = 0,
\]
and
\[
\tilde{g} \in L^2(0,T;H^2(\Omega)) \cap H^2(0,T;H^{1}(\Omega))
\]
with the estimate
\[
\|\tilde{g}\|_{L^2(0,T;H^2(\Omega))} + \|\tilde{g}\|_{H^2(0,T;H^{1}(\Omega))} \le C \|g\|_{\mathcal{B}}.
\]
Set \(v = u - \tilde{g}\). Then \(v\) satisfies homogeneous boundary and initial conditions, and solves
\begin{equation}\label{eq2.1}
\partial_t^2 v-\Delta v+p(x)\partial_t^\alpha v+q(x)v=F(x,t),
\qquad (x,t)\in Q,
\end{equation}
with source term
\[
F := -\partial_t^2 \tilde{g} + \Delta \tilde{g} - p\,\partial_t^\alpha \tilde{g} - q\tilde{g}.
\]
Thanks to the regularity of $\tilde{g}$ and the boundedness of $p, q$, together with the estimate for the Caputo derivative \(\|\partial_t^\alpha \tilde g\|_{L^2(Q)}
\le C\|\partial_t\tilde g\|_{L^2(Q)}\) (see \cite{Podlubny1999}), we conclude that $F \in L^2(Q)$ and
\begin{equation}\label{eq2.2}
\|F\|_{L^2(Q)} \le C \|g\|_{\mathcal{B}},
\end{equation}
where \(C>0\) depends only on \(\Omega\), \(T\), \(\alpha\), \(\|p\|_{L^\infty(\Omega)}\), and \(\|q\|_{L^\infty(\Omega)}\).

The homogeneous initial-boundary value problem \eqref{eq2.1} falls within the framework of~\cite{HK2023}. By~\cite[Theorem~1.1]{HK2023}, there exists a unique solution
\[
v\in C([0,T];H_0^1(\Omega))\cap C^1([0,T];L^2(\Omega))
\]
satisfying the stability estimate
\[
\|v\|_{C([0,T];H_0^1(\Omega))}
+
\|\partial_t v\|_{C([0,T];L^2(\Omega))}
\le C\|F\|_{L^2(Q)}.
\]
Here \(C>0\) depends only on \(\Omega\), \(T\), \(\alpha\), \(\|p\|_{L^\infty(\Omega)}\), and \(\|q\|_{L^\infty(\Omega)}\).

Returning to the original variable \(u = v + \tilde{g}\) and using the estimates for \(v\) together with \eqref{eq2.2}, we obtain
\[
\begin{aligned}
\|u\|_{C([0,T];H^1(\Omega))}
&\le \|v\|_{C([0,T];H_0^1(\Omega))} + \|\tilde{g}\|_{C([0,T];H^1(\Omega))}
\le C\|F\|_{L^2(Q)} + C\|g\|_{\mathcal{B}}
\le C\|g\|_{\mathcal{B}},
\end{aligned}
\]
and analogously
\[
\begin{aligned}
\|\partial_t u\|_{C([0,T];L^2(\Omega))}
&\le \|\partial_t v\|_{C([0,T];L^2(\Omega))} + \|\partial_t \tilde{g}\|_{C([0,T];L^2(\Omega))}
\le C\|F\|_{L^2(Q)} + C\|g\|_{\mathcal{B}}
\le C\|g\|_{\mathcal{B}}.
\end{aligned}
\]
The regularity \(u \in C([0,T];H^1(\Omega)) \cap C^1([0,T];L^2(\Omega))\) follows directly from that of \(v\) and \(\tilde{g}\). The uniqueness of \(u\) follows from the uniqueness of \(v\) for the homogeneous boundary problem. This completes the proof.
\end{proof}

\subsection*{Dirichlet-to-Neumann map and uniqueness result}

The well-posedness result above allows us to define the boundary measurement operator. For $p,q\in L^\infty(\Omega)$, let $u$ be the solution of \eqref{eq1.1} corresponding to the boundary input $g\in\mathcal{B}$. The Dirichlet-to-Neumann map is defined by
\[
\Lambda_{p,q}:\mathcal{B}\ni g
\longmapsto
\partial_\nu u|_\Sigma,
\]
where $\partial_\nu$ denotes the outward normal derivative on $\partial\Omega$. The main inverse problem of this paper is to determine the two spatial coefficients $p$ and $q$ from $\Lambda_{p,q}$. Our uniqueness result is as follows.

\begin{thm}[Uniqueness]\label{thm uniqueness}
Let $p_j,q_j\in L^\infty(\Omega)$, $j=1,2$, and let $T>2r$, where $r$ is the radius of the smallest closed ball containing $\overline{\Omega}$ with the origin placed at its centre. If
\[
\Lambda_{p_1,q_1}(g)=\Lambda_{p_2,q_2}(g)
\qquad
\forall\,g\in\mathcal{B},
\]
then
\[
p_1=p_2,
\qquad
q_1=q_2
\quad
\text{a.e. in } \Omega.
\]
\end{thm}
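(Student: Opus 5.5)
The plan is the Rakesh--Symes route adapted to the Caputo term. Let $u_1$ solve \eqref{eq1.1} with coefficients $(p_1,q_1)$ and data $g$. Let $u_2$ solve the problem with $(p_2,q_2)$ and the same $g$. Set $w=u_1-u_2$, $p=p_1-p_2$, $q=q_1-q_2$. Then $w$ has zero Cauchy data and zero Dirichlet trace, and by hypothesis $\partial_\nu w=0$ on $\Sigma$. It solves
\[
\partial_t^2 w-\Delta w+p_2\partial_t^\alpha w+q_2 w=-p\,\partial_t^\alpha u_1-q\,u_1 .
\]
We test against a solution $v$ of the \emph{adjoint} problem
\[
\partial_t^2 v-\Delta v+p_2 D_{T-}^\alpha v+q_2 v=0 \text{ in } Q,\qquad v(\cdot,T)=\partial_t v(\cdot,T)=0,
\]
where $D^\alpha_{T-}$ is the backward (right) Riemann--Liouville/Caputo derivative. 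This is the adjoint of $k_\alpha *\partial_t$ on $(0,T)$, via $\int_0^T (k_\alpha*f)h\,dt=\int_0^T f\,(\check k_\alpha \star h)\,dt$. Well-posedness of this backward problem follows from Theorem \ref{thm wellp} after the change $t\mapsto T-t$.

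Integrating by parts in $x$ and $t$, all boundary terms vanish. In time they vanish by the zero Cauchy data of $w$ at $0$ and of $v$ at $T$. In space they vanish because $w=\partial_\nu w=0$. This gives the convolution-type identity
\[
\int_Q \big(p\,\partial_t^\alpha u_1+q\,u_1\big)v\,dx\,dt=0 .
\]
Boundary traces of $v$ never enter, which is the point of using the adjoint equation rather than a second forward solution. The data $g=u_1|_\Sigma$ need to lie in $\mathcal B$; this is arranged by choosing $u_1$ smooth with vanishing initial data, as below.

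Next I would construct beam (plane-wave) solutions. Fix $\omega\in S^{n-1}$ and $\xi\perp\omega$, and take $\chi\in C_c^\infty(\mathbb R)$ supported in $(-\varepsilon,\varepsilon)$. Set
\[
u_1=\chi(x\cdot\omega-t+r+\varepsilon)\,e^{i\sigma(x\cdot\omega-t)}\,e^{i x\cdot\xi/2}+R_1,
\]
and $v=\chi(\cdot)\,e^{-i\sigma(x\cdot\omega-t)}e^{ix\cdot\xi/2}+R_2$ with the analogous cut-off. The shift makes the main parts vanish near $t=0$ for $u_1$; since $T>2r$, the cut-off for $v$ can be shifted to vanish near $t=T$. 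Both main parts are then compatible with the initial or final conditions in a neighbourhood of $\overline\Omega$. Applying the wave operator to a plane-wave ansatz kills the $\sigma^2$ and $\sigma$ terms, since $|\omega|=1$ and the profile is transported along $x\cdot\omega-t$. The residual consists of $O(1)$ terms, including the $\xi$-dependent ones, plus the fractional term. The remainders $R_j$ solve the corresponding problem with zero data and source equal to minus this residual. By Theorem \ref{thm wellp} (or the energy estimate behind it), $\|R_j\|$ is controlled by the source in $L^2(Q)$. To get decay I would divide by suitable powers of $\sigma$ and normalize.

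The key estimate is on $\partial_t^\alpha$ of the oscillatory main part $\phi_\sigma=\chi\,e^{-i\sigma t}\cdots$. Since $\partial_t^\alpha\phi=k_\alpha*\partial_t\phi$ and $\partial_t\phi\approx -i\sigma\phi$, we need the asymptotics of $k_\alpha*(e^{-i\sigma\cdot}\chi)$. Split the convolution into $|t-\tau|<\delta$ and its complement. On the far part, integrate by parts to gain $\sigma^{-1}$. On the near part, use $\int_0^\infty k_\alpha(s)e^{i\sigma s}ds=(-i\sigma)^{\alpha-1}$. The result is
\[
\partial_t^\alpha\phi_\sigma=(-i\sigma)^{\alpha}\phi_\sigma+O(\sigma^{\alpha-1})\quad\text{in }L^2(Q),
\]
uniformly, where the error includes the $t=0$ endpoint contribution, which is harmless because the cut-off vanishes there. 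I expect this step to be the main obstacle: controlling the endpoint and the non-smooth kernel uniformly in $x$, in order to obtain the $O(\sigma^{\alpha-1})$ remainder. Without normalization the fractional term contributes $O(\sigma^\alpha)$ to the residual. So I would instead include a $\sigma$-dependent amplitude correction $e^{-\frac{1}{2}\int p_2(-i\sigma)^{\alpha-1}\,ds}$ along rays, or simply accept the bound $\|R_j\|_{L^2}\lesssim\sigma^{\alpha}\cdot\sigma^{-1}$ after rescaling $w$-type energy estimates. That bound is the claimed order $\sigma^{\alpha-1}$.

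Finally, I would insert these solutions into the identity. The phases cancel in the product $u_1v$, leaving $e^{ix\cdot\xi}\chi^2$. This gives
\[
(-i\sigma)^{\alpha}\int_Q p\,\chi^2 e^{ix\cdot\xi}\,dx\,dt+\int_Q q\,\chi^2e^{ix\cdot\xi}\,dx\,dt+o(\sigma^\alpha)+O(1)\cdot o(1)=0 .
\]
Dividing by $\sigma^\alpha$ and letting $\sigma\to\infty$ kills the $q$-term. After integrating the cut-off in $t$ this yields $c\,\hat p(\xi)=0$ with $c=\int\chi^2\neq0$, for all $\xi\perp\omega$ and every $\omega$, hence $\hat p\equiv0$ and $p=0$ a.e. Returning to the identity with $p=0$ and letting $\sigma\to\infty$ gives $\hat q(\xi)=0$, so $q=0$ a.e. Here $p,q$ are extended by zero outside $\Omega$. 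The requirement $T>2r$ is exactly what lets the wave packet cross the ball containing $\overline\Omega$ inside $(0,T)$, so that the $t$-integration captures all of $\Omega$.
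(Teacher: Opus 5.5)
Your route is legitimate in outline, and it differs from the paper's in a useful way. The adjoint pairing $\int_Q(p\,\partial_t^\alpha u_1+q\,u_1)v\,dx\,dt=0$ is essentially the dual form of the paper's convolution identity, since pairing with $v(t-\tau)$ amounts to pairing with a backward solution. Your compactly supported packets with $\xi\perp\omega$ cross $\Omega$ inside $(0,T)$, so they give $c\,\hat p(\xi)$ directly and avoid the paper's offset $t_0(x)$ and its two-time subtraction.

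However, the step that carries the argument is not proved, and it is not the one you flag. The expansion $\partial_t^\alpha M_1=(-i\sigma)^\alpha M_1+O(\sigma^{\alpha-1})$ is routine, because the packet vanishes near $t=0$; here $M_1,M_2$ denote the explicit main parts of $u_1,v$. The real issue is the decay of the remainders.

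The source for $R_j$ contains $p_j\partial_t^\alpha M_j$, which has size $\sigma^\alpha$ in $L^2(Q)$. So Theorem~\ref{thm wellp} only gives $\|R_j\|\lesssim\sigma^\alpha$. None of your three fixes closes this:
\begin{itemize}
\item Dividing by powers of $\sigma$ rescales the main part and the remainder alike, so it produces no decay.
\item The ray factor $\exp(-\tfrac12(-i\sigma)^{\alpha-1}\int p\,ds)$ is not differentiable transversally when $p\in L^\infty(\Omega)$, so $(\partial_t^2-\Delta)a\notin L^2$. Even for smooth $p$ it leaves an $O(1)$ oscillatory residual (from $(\partial_t^2-\Delta)$ of the cut-off times $e^{ix\cdot\xi/2}$, and from $q_1M_1$), for which the energy estimate again gives no decay.
\item Accepting $\sigma^\alpha\cdot\sigma^{-1}$ is exactly the claim that needs proof.
\end{itemize}

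The missing idea is the time-integration trick (Lemma~\ref{lem cgo}, Step~2). The coefficients are time independent and $R_j$ has zero Cauchy data. Hence $\mathcal{I}R_j=\int_0^tR_j\,d\tau$ solves the same problem with source $\mathcal{I}F_j$, using $\mathcal{I}\partial_t^\alpha R=k_\alpha*R=\partial_t^\alpha\mathcal{I}R$. This gives $\|R_j\|_{L^2(Q)}\lesssim\|\mathcal{I}F_j\|_{L^2(Q)}$. Time integration gains $\sigma^{-1}$ on oscillatory sources; in particular $\mathcal{I}(p\,\partial_t^\alpha M)=p\,k_\alpha*M=O(\sigma^{\alpha-1})$, obtained by splitting at $s=\sigma^{-1}$ and integrating by parts on the rest.

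Without $\|R_j\|_{L^2(Q)}\to0$, neither your $p$-step nor your $q$-step closes. For example, with only the energy bound, $\int_Q p\,(\partial_t^\alpha M_1)R_2$ is of order $\sigma^{2\alpha}$, which swamps the principal term of order $\sigma^\alpha$.

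There is a second gap. Even granting $\|R_1\|_{L^2(Q)}\lesssim\sigma^{\alpha-1}$ and $\|\partial_tR_1\|_{L^2(Q)}\lesssim\sigma^{\alpha}$, your claimed error $o(\sigma^\alpha)$ is unjustified for $\int_Q p\,(\partial_t^\alpha R_1)M_2$. Bounded directly through $\|\partial_tR_1\|$, this term is $O(\sigma^\alpha)$, the same order as the principal term. You must move the fractional derivative onto the explicit packet:
$\int_0^T(\partial_t^\alpha R_1)M_2\,dt=\int_0^TR_1\,D^\alpha_{T-}M_2\,dt$.
This is legitimate because $R_1(\cdot,0)=0$ and $M_2$ vanishes near $t=T$. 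Then $|D^\alpha_{T-}M_2|\lesssim\sigma^\alpha$ gives $O(\sigma^{2\alpha-1})=o(\sigma^\alpha)$. The paper gets this for free from commutativity of the time convolution, $k_\alpha*\partial_t(R_u*v_{\rm app})=R_u*(k_\alpha*\partial_tv_{\rm app})$.

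Two minor points. The amplitude correction for $u_1$ would involve $p_1$, not $p_2$. Covering every $\xi\in\mathbb{R}^n$ by $\xi\perp\omega$ requires $n\ge2$; the paper's choice $\omega_1+\omega_2=-\xi/\sigma$ tacitly needs the same.
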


\begin{rem}
The condition $T>2r$ is essential for the geometric-optics construction used in the uniqueness proof; it guarantees that the beam solutions are fully supported inside $\Omega$ for large times. No smoothness or sign condition is required on the coefficients beyond $L^\infty(\Omega)$.
\end{rem}


\section{Uniqueness for the inverse problem}\label{sec3}

In this section we prove Theorem~\ref{thm uniqueness}. The proof consists of three main ingredients. First, we derive an integral identity from the equality of the DtN maps. Second, we construct high-frequency beam solutions adapted to the fractional damping term and estimate the corresponding remainders. Finally, we substitute these special solutions into the integral identity and use their high-frequency asymptotics to recover the Fourier transforms of the coefficient differences. Throughout this section, for two sets of coefficients $(p_j,q_j)$, $j=1,2$, we write \(p:=p_1-p_2\) and \(q:=q_1-q_2\).

\begin{lem}[Integral identity]\label{lem iden}

Let $u$ be the solution of \eqref{eq1.1} corresponding to the coefficients $(p_1,q_1)$ and boundary input $g_1$, and  $v$ be the solution corresponding to the coefficients $(p_2,q_2)$ and boundary input $g_2$. Assume that \(\Lambda_{p_1,q_1}=\Lambda_{p_2,q_2}\). Then, for every $t\in(0,T]$, one has
\begin{equation}\label{eq iduv0}
\int_{\Omega}p\,k_\alpha*\partial_t(u*v)\,dx
+
\int_{\Omega}q\,u*v\,dx
=0.
\end{equation}
\end{lem}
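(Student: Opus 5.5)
The plan is to derive \eqref{eq iduv0} by a time-convolution version of the standard Green identity argument. The equal DtN maps will let us replace $v$ by an auxiliary solution with the same Dirichlet and Neumann data. Let $w$ be the solution of \eqref{eq1.1} with coefficients $(p_2,q_2)$ and boundary input $g_1$, so that $\partial_\nu w=\Lambda_{p_2,q_2}g_1=\Lambda_{p_1,q_1}g_1=\partial_\nu u$ on $\Sigma$. Then $U:=u-w$ satisfies $U=\partial_\nu U=0$ on $\Sigma$, has zero initial data, and solves
\[
\partial_t^2U-\Delta U+p_2\partial_t^\alpha U+q_2U=-p\,\partial_t^\alpha u-q\,u\quad\text{in }Q.
\]

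Next I will convolve this equation in time with $v$ and integrate over $\Omega$ for fixed $t$. The goal is to move every operator on $U$ onto $v$.

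\textbf{Second time derivative.} Since $U,\partial_tU,v,\partial_tv$ vanish at $t=0$, we have $\partial_t^2U*v=U*\partial_t^2v$. This is the identity $\partial_t^2(U*v)$ computed both ways.

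\textbf{Laplacian.} Green's formula applied to $\int_\Omega(\Delta U*v-U*\Delta v)\,dx$ produces only boundary terms involving $U$ and $\partial_\nu U$ on $\Sigma$. These vanish.

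\textbf{Fractional term.} I use commutativity and associativity of convolution: $(k_\alpha*\partial_tU)*v=k_\alpha*\partial_t(U*v)=U*(k_\alpha*\partial_tv)$, again using the zero initial values.

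Since $v$ solves the $(p_2,q_2)$ equation, the left side collapses to $\int_\Omega U*(\partial_t^2v-\Delta v+p_2\partial_t^\alpha v+q_2v)\,dx=0$. We therefore obtain
\[
\int_\Omega p\,(k_\alpha*\partial_tu)*v\,dx+\int_\Omega q\,u*v\,dx=0,
\]
and rewriting $(k_\alpha*\partial_tu)*v=k_\alpha*\partial_t(u*v)$ (using $u(\cdot,0)=0$) gives \eqref{eq iduv0}.

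The main obstacle is justifying these manipulations at the available regularity. Theorem~\ref{thm wellp} only gives $C([0,T];H^1)\cap C^1([0,T];L^2)$, so $\Delta U$ and $\partial_t^2U$ are not in $L^2$, and the Green identity needs care. I would first prove the identity for smooth data, or with coefficients mollified, where solutions are regular enough that $\Delta U*v$ makes sense. Alternatively, I would interpret the equation for $U$ weakly in $H^{-1}$ and use that $U(\cdot,t)\in H^1_0(\Omega)$ with vanishing normal derivative, which is what makes the boundary terms disappear. The identity would then pass to the limit using the stability estimate of Theorem~\ref{thm wellp} and the continuity of both sides of \eqref{eq iduv0} in the energy norms, via Young's inequality with $k_\alpha\in L^1(0,T)$. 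A fallback is to integrate the identity once more in time, which lowers the needed regularity by one derivative before differentiating back.
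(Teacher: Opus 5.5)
Your proof is correct. It reaches \eqref{eq iduv0} by a different arrangement of the same ingredients.

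The paper applies the convolution Green identity directly to $u$ and $v$, which solve equations with different coefficients. This gives \eqref{eq boundary_identity} with boundary term $\int_{\partial\Omega}(\partial_\nu u*v-\partial_\nu v*u)\,dS$. The paper rewrites this via the DtN maps as $\int_{\partial\Omega}(\Lambda_{p_2,q_2}g_1*g_2-\Lambda_{p_2,q_2}g_2*g_1)\,dS$. It then shows this vanishes by a second application of the identity to two solutions of the $(p_2,q_2)$-equation, i.e.\ by a convolution symmetry of $\Lambda_{p_2,q_2}$.

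You instead subtract the auxiliary solution $w$ at the outset, so $U=u-w$ has zero Cauchy data on $\Sigma$. A single duality computation against $v$ then produces no boundary terms. Your $w$ is exactly the paper's auxiliary solution with data $g_1$, and your computation is the difference of the paper's two Green identities, so the content is equivalent.

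Your version is more transparent. The hypothesis $\Lambda_{p_1,q_1}=\Lambda_{p_2,q_2}$ enters only through $\partial_\nu U=0$. That condition is also the natural handle for a rigorous proof, since a function with vanishing Cauchy data can be tested against functions that do not vanish on $\partial\Omega$, or extended by zero.

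The paper's version isolates two reusable facts: the identity \eqref{eq boundary_identity} for arbitrary coefficient pairs, and the symmetry of $\Lambda_{p_2,q_2}$. Your computation recovers the same general identity if you keep $\partial_\nu U$ instead of setting it to zero. The boundary term is then $\int_{\partial\Omega}(\Lambda_{p_1,q_1}-\Lambda_{p_2,q_2})g_1*g_2\,dS$.

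Your regularity discussion goes beyond the paper, which does not address it at all. Two small corrections are needed.

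First, mollifying the coefficients would destroy $\Lambda_{p_1,q_1}=\Lambda_{p_2,q_2}$ and hence $\partial_\nu U=0$. Only regularizing the boundary data fits your argument; this keeps the Cauchy data of $U$ zero, because the DtN maps agree on all of $\mathcal{B}$.

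Second, $\partial_\nu U=0$ must be encoded as the variational identity holding for all test functions in $H^1(\Omega)$. Equivalently, the equation for $U$ must hold in $(H^1(\Omega))'$ rather than merely in $H^{-1}(\Omega)$, since $v(\cdot,t)$ does not vanish on $\partial\Omega$.
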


\begin{proof}
The solutions $u$ and $v$ satisfy
\[
\partial_t^2u-\Delta u+p_1\partial_t^\alpha u+q_1u=0,\quad
\partial_t^2v-\Delta v+p_2\partial_t^\alpha v+q_2v=0.
\]
Convolve the first equation with $v$ and the second with $u$ in time, then subtract to obtain
\begin{equation}\label{eq iduv}
\big[(\partial_t^2 u)*v - (\partial_t^2 v)*u\big] 
- \big[(\Delta u)*v - (\Delta v)*u\big] + \big[p_1 (\partial_t^\alpha u)*v - p_2 (\partial_t^\alpha v)*u\big] 
+ (q_1-q_2)\,u*v = 0. 
\end{equation}
Since $u$ and $v$ have zero initial displacement and velocity, we have
\[
(\partial_t^2u)*v=(\partial_t^2v)*u=\partial_t^2(u*v).
\]
Thus the first bracket in \eqref{eq iduv} vanishes. 

For the elliptic part, using the product rule with respect to the spatial variables gives
\[
(\Delta u)*v-(\Delta v)*u
=
\nabla\cdot\bigl((\nabla u)*v-(\nabla v)*u\bigr).
\]
Moreover, since
\[
\partial_t^\alpha w=k_\alpha*\partial_tw
\]
for functions satisfying $w(\cdot,0)=0$, the associativity and commutativity of time convolution imply
\[
(\partial_t^\alpha u)*v
=
(k_\alpha*\partial_tu)*v
=
k_\alpha*(\partial_tu*v)
=
k_\alpha*\partial_t(u*v),
\]
where we used $u(\cdot,0)=0$. Similarly,
\[
(\partial_t^\alpha v)*u
=
k_\alpha*\partial_t(u*v).
\]
Therefore,
\[
p_1(\partial_t^\alpha u)*v
-
p_2(\partial_t^\alpha v)*u
=
(p_1-p_2)k_\alpha*\partial_t(u*v).
\]
Substituting these identities into \eqref{eq iduv}, we get
\[
-\nabla\cdot\bigl((\nabla u)*v-(\nabla v)*u\bigr)
+
p\,k_\alpha*\partial_t(u*v)
+
q\,u*v
=0.
\]
Integrating over $\Omega$ and applying the divergence theorem yield
\begin{equation}\label{eq boundary_identity}
\begin{aligned}
\int_{\Omega}p\,k_\alpha*\partial_t(u*v)\,dx
+
\int_{\Omega}q\,u*v\,dx
=
\int_{\partial\Omega}
\left(
\partial_\nu u*v-\partial_\nu v*u
\right)dS .
\end{aligned}
\end{equation}

We now show that the boundary term on the right-hand side of
\eqref{eq boundary_identity} vanishes. Since
\[
u|_\Sigma=g_1,\qquad v|_\Sigma=g_2,
\]
we have
\[
\int_{\partial\Omega}
\left(
\partial_\nu u*v-\partial_\nu v*u
\right)dS
=
\int_{\partial\Omega}
\left(
\Lambda_{p_1,q_1}g_1*g_2
-
\Lambda_{p_2,q_2}g_2*g_1
\right)dS .
\]
By the assumption \(\Lambda_{p_1,q_1}=\Lambda_{p_2,q_2}\), hence the boundary term becomes
\[
\int_{\partial\Omega}
\left(
\Lambda_{p_2,q_2}g_1*g_2
-
\Lambda_{p_2,q_2}g_2*g_1
\right)dS .
\]

It remains to prove that this last expression is zero. To see this, consider the same argument as above, but now with the same coefficients
\((p_2,q_2)\) in both equations and with two boundary inputs \(g_1\) and \(g_2\). Let \(U\) and \(V\) be the corresponding solutions. Since the two coefficient pairs are identical, the volume terms involving the coefficient differences vanish. Therefore the boundary identity reduces to
\[
\int_{\partial\Omega}
\left(
\partial_\nu U*V-\partial_\nu V*U
\right)dS=0.
\]
Using \(U|_\Sigma=g_1\), \(V|_\Sigma=g_2\), and
\[
\partial_\nu U=\Lambda_{p_2,q_2}g_1,\qquad
\partial_\nu V=\Lambda_{p_2,q_2}g_2,
\]
we obtain
\[
\int_{\partial\Omega}
\left(
\Lambda_{p_2,q_2}g_1*g_2
-
\Lambda_{p_2,q_2}g_2*g_1
\right)dS=0.
\]
Consequently, the right-hand side of \eqref{eq boundary_identity} vanishes, and the integral identity
\eqref{eq iduv0} follows.
\end{proof}

\begin{lem}[High-frequency beam solutions]\label{lem cgo}
Let the origin be chosen as the centre of the smallest closed ball containing $\overline{\Omega}$, and let $r$ be its radius, so that \(|x|\le r, x\in\Omega\). Assume that \(T>2r\), we fix \(\epsilon>0\) sufficiently small such that \(0<\epsilon<\frac{T-2r}{2}\). This ensures that the interval \((2r+2\epsilon,T)\) is nonempty. Let $\theta_\epsilon\in C^\infty(\mathbb{R})$ satisfy
\[
\theta_\epsilon(s)=0\,\, (s\le0),\quad
\theta_\epsilon(s)=1\,\, (s\ge\epsilon),\quad
\theta_\epsilon'(s)\ge0.
\]
For a parameter \(\sigma>0\) and a unit vector \(\omega\in\mathbb{S}^{n-1}\), define the beam solution
\[
u_{\rm app}(x,t):=e^{i\sigma\phi(x,t)}A(x,t),\quad \phi(x,t):=x\cdot\omega+t,\quad
A(x,t):=\theta_\epsilon(\phi(x,t)-r).
\]
Then the boundary trace \(g:=u_{\rm app}|_{\Sigma}\) belongs to \(\mathcal{B}\) and satisfies the compatibility conditions \eqref{eq com con}. 

Moreover, let $u$ be the exact solution of \eqref{eq1.1} with boundary value $g$, coefficients $p,q\in L^\infty(\Omega)$, and zero initial conditions. Setting \(R:=u-u_{\rm app}\), we have
\begin{equation*}
\|R\|_{L^2(Q)}\le C\sigma^{\alpha-1},
\qquad
\|\partial_tR\|_{L^2(Q)}\le C\sigma^\alpha,
\end{equation*}
where $C>0$ depends on $\Omega$, $T$, $\epsilon$, $\alpha$, $\|p\|_{L^\infty(\Omega)}$, and $\|q\|_{L^\infty(\Omega)}$, but is independent of $\sigma$.
\end{lem}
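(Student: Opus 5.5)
The plan rests on one observation: the phase $\phi=x\cdot\omega+t$ solves the eikonal equation, and the amplitude $A=\theta_\epsilon(\phi-r)$ is a function of $\phi$ alone. As a result, $u_{\rm app}$ is an \emph{exact} solution of the free wave equation, and the whole error comes from the lower-order terms. The $L^2$ bound of order $\sigma^{\alpha-1}$ (rather than $\sigma^\alpha$) will come from working with the time antiderivative of $R$.

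\textbf{Boundary data.} Since $|x|\le r$ on $\overline\Omega$, at time $t$ we have $\phi(x,t)-r\le t$. Because $\theta_\epsilon$ and all its derivatives vanish on $(-\infty,0]$, $u_{\rm app}$ and all its time derivatives vanish at $t=0$. Moreover $u_{\rm app}\in C^\infty(\overline Q)$, so its trace $g$ lies in $\mathcal{B}$ and satisfies \eqref{eq com con}.

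\textbf{Free wave operator.} I would compute directly:
\[
(\partial_t^2-\Delta)u_{\rm app}=e^{i\sigma\phi}\bigl[-\sigma^2(\phi_t^2-|\nabla\phi|^2)A+i\sigma\bigl(2(\phi_tA_t-\nabla\phi\cdot\nabla A)+A\,(\partial_t^2-\Delta)\phi\bigr)+(\partial_t^2-\Delta)A\bigr].
\]
Each bracket vanishes. First, $\phi_t^2=|\nabla\phi|^2=1$. Second, $A_t=\theta_\epsilon'$ and $\nabla A=\theta_\epsilon'\omega$, so $\phi_tA_t-\nabla\phi\cdot\nabla A=0$. Third, $\phi$ is linear, so $(\partial_t^2-\Delta)\phi=0$. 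Fourth, $(\partial_t^2-\Delta)A=\theta_\epsilon''(1-|\omega|^2)=0$.

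Hence $R$ solves \eqref{eq2.1} with homogeneous initial and boundary data and source
\[
F=-p\,\partial_t^\alpha u_{\rm app}-q\,u_{\rm app}.
\]
Since $u_{\rm app}(\cdot,0)=0$, we have $\partial_t^\alpha u_{\rm app}=\partial_t(k_\alpha*u_{\rm app})$ and $\|\partial_t^\alpha u_{\rm app}\|_{L^2(Q)}\le\|k_\alpha\|_{L^1}\|\partial_tu_{\rm app}\|_{L^2(Q)}\le C\sigma$. This crude bound is too weak. A sharper one, $\|F\|_{L^2(Q)}\le C\sigma^\alpha$, follows from the kernel estimate below applied to $k_\alpha*(e^{i\sigma\phi}\partial_tA)$ together with $k_\alpha*(i\sigma e^{i\sigma\phi}A)=i\sigma\,k_\alpha*(e^{i\sigma\phi}A)$. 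The stability estimate from \cite[Theorem~1.1]{HK2023} then gives $\|\partial_tR\|_{L^2(Q)}\le C\sigma^\alpha$.

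\textbf{Antiderivative argument.} For the $L^2$ bound, set $W(x,t):=\int_0^tR(x,s)\,ds$. Since $W(\cdot,0)=0$, integrating the equation for $R$ in time and commuting $\int_0^t$ with $k_\alpha*\partial_t$ shows that $W$ again solves \eqref{eq2.1} with zero data. Its source is
\[
G=-p\,k_\alpha*u_{\rm app}-q\int_0^tu_{\rm app}\,ds.
\]
The same stability estimate yields $\|R\|_{L^2(Q)}=\|\partial_tW\|_{L^2(Q)}\le C\|G\|_{L^2(Q)}$. It therefore suffices to prove two bounds, uniformly in $x\in\Omega$:
\[
\Bigl\|\int_0^tu_{\rm app}\Bigr\|_{L^2(0,T)}\le C\sigma^{-1},
\qquad
\|k_\alpha*(e^{i\sigma\phi}a)\|_{L^2(0,T)}\le C\sigma^{\alpha-1},
\]
for smooth $a$ vanishing to infinite order at $t=0$. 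The first bound follows from one integration by parts, writing $e^{i\sigma t}=(i\sigma)^{-1}\partial_te^{i\sigma t}$.

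\textbf{Main obstacle: the singular convolution.} The second bound is where the singularity of $k_\alpha$ matters. For fixed $t$, I would split $\int_0^t k_\alpha(t-\tau)e^{i\sigma\tau}a(\tau)\,d\tau$ at $t-\tau=1/\sigma$.
\begin{itemize}
\item Near the singularity, the piece with $t-\tau<1/\sigma$ is bounded by $\int_0^{1/\sigma}k_\alpha\le C\sigma^{\alpha-1}$.
\item On the remaining piece, integrate by parts using $e^{i\sigma\tau}=(i\sigma)^{-1}\partial_\tau e^{i\sigma\tau}$.
\end{itemize}
The integration by parts produces three contributions. The boundary term at $t-\tau=1/\sigma$ is of size $\sigma^{-1}k_\alpha(1/\sigma)=C\sigma^{\alpha-1}$. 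The boundary term at $\tau=0$ vanishes because $a(0)=0$. The integral terms are bounded by $\sigma^{-1}\int_{1/\sigma}^T|k_\alpha'|\le C\sigma^{-1}k_\alpha(1/\sigma)=C\sigma^{\alpha-1}$ and $\sigma^{-1}\|k_\alpha\|_{L^1}\|a'\|_\infty\le C\sigma^{-1}$. All these bounds are pointwise in $t$, uniform in $x$, and the constants depend only on $\epsilon$ through derivatives of $\theta_\epsilon$. Integrating over $Q$ then gives $\|R\|_{L^2(Q)}\le C\sigma^{\alpha-1}$.
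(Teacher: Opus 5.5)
Your proposal is correct and follows the paper's proof essentially step for step. In both, $u_{\rm app}$ is an exact solution of the free wave equation, $R$ is controlled through its time antiderivative so that the source becomes $-p\,k_\alpha*u_{\rm app}-q\int_0^t u_{\rm app}$, and the singular convolution is split at $t-\tau=1/\sigma$ with one integration by parts on the remaining piece.

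The differences are minor improvements. You obtain $\|\partial_t^\alpha u_{\rm app}\|_{L^2(Q)}\le C\sigma^\alpha$ by linearity from the same kernel estimate, instead of redoing the integration by parts with $B=i\sigma A+A_t$ as in the paper's Step~5. Your rate $C\sigma^{-1}$ for $\int_0^t q\,u_{\rm app}$ is also the correct one: the paper's claimed $\sigma^{-2}$ drops the nonvanishing boundary term $e^{i\sigma t}A(x,t)/(i\sigma)$, which is harmless since $\sigma^{-1}\le\sigma^{\alpha-1}$.
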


\begin{proof}

The proof proceeds in several steps. We first verify the properties of the approximate solution \(u_{\mathrm{app}}\) (regularity of its boundary trace, zero initial conditions, and satisfaction of the unperturbed wave equation). Then we derive the equation for the remainder \(R = u - u_{\mathrm{app}}\) and reduce its estimate to that of the time-integrated source \(\mathcal{I}F\). Finally, we bound \(\mathcal{I}F\) and \(F\) using oscillatory integration by parts to obtain the desired estimates for \(\|R\|_{L^2(Q)}\) and \(\|\partial_t R\|_{L^2(Q)}\).

\medskip
\noindent\textbf{Step 1.  Regularity of the boundary trace, initial conditions, and the wave operator.} By construction, the boundary trace is
\[
g(x,t) = e^{i\sigma(x\cdot\omega + t)} \theta_\epsilon(x\cdot\omega + t - r), \quad x \in \partial\Omega, \, t\in(0,T),
\]
with time derivative
\[
\partial_t g(x,t) = e^{i\sigma(x\cdot\omega + t)}\big(i\sigma \theta_\epsilon(x\cdot\omega + t - r) + \theta_\epsilon'(x\cdot\omega + t - r)\big).
\]
For each fixed \(t \in [0,T]\), the map
\[
x \mapsto g(x,t) = \theta_\epsilon(x\cdot\omega + t - r)
\]
is smooth on \(\partial\Omega\). Moreover, since \(|x|\le r\) for all \(x\in\overline{\Omega}\) and \(|\omega|=1\), we have \(x\cdot\omega \le r\). Therefore
\[
x\cdot\omega + t - r \le 0 \implies \theta_\epsilon(x\cdot\omega + t - r) = 0,
\]
and
\[
x\cdot\omega + t - r \ge \epsilon \implies \theta_\epsilon(x\cdot\omega + t - r) = 1.
\]
Hence, for each \(t\in[0,T]\), the support of \(g(\cdot,t)\) is contained in the compact set
\[
\{ x \in \partial\Omega \,:\, r-t \le x\cdot\omega \le r-t+\epsilon \} \subset \partial\Omega,
\]
so that \(g(\cdot,t)\in C_c^\infty(\partial\Omega)\). Since \(\theta_\epsilon\) is smooth in \(t\) as well, the map \(t \mapsto g(\cdot,t)\) is \(C^\infty\), and therefore \(g \in C^\infty(\partial\Omega \times [0,T])\). Moreover, at \(t=0\) we have
\[
x\cdot\omega - r \le 0 \implies g(x,0) = \partial_t g(x,0) = 0,
\]
which verifies the vanishing initial condition required for the lifting argument.

Because \(\phi(x,0)=x\cdot\omega\le r\), we have \(\phi(x,0)-r\le0\), hence \(A(x,0)=0\) and \(A_t(x,0)=\theta_\epsilon'(\phi(x,0)-r)=0\). Consequently
\[
u_{\mathrm{app}}(x,0)=0,\quad 
\partial_t u_{\mathrm{app}}(x,0)=e^{i\sigma\phi(x,0)}(i\sigma A(x,0)+A_t(x,0))=0.
\]
Thus \(u_{\mathrm{app}}\) satisfies the same zero initial conditions as the exact solution.

Since \(A\) is a function of \(\phi - r\) only, we have
\[
A_t = \theta_\epsilon'(\phi - r), \quad \nabla A = \theta_\epsilon'(\phi - r)\omega, \quad A_{tt} = \theta_\epsilon''(\phi - r), \quad \Delta A = \theta_\epsilon''(\phi - r).
\]
Compute the derivatives:
\[
\partial_t u_{\mathrm{app}} = e^{i\sigma\phi}(i\sigma A + A_t),\qquad
\partial_t^2 u_{\mathrm{app}} = e^{i\sigma\phi}\big[ -\sigma^2 A + 2i\sigma A_t + A_{tt} \big],
\]
\[
\nabla u_{\mathrm{app}} = e^{i\sigma\phi}(i\sigma\omega A + \nabla A),\qquad
\Delta u_{\mathrm{app}} = \nabla\cdot\nabla u_{\mathrm{app}} = e^{i\sigma\phi}\big[ -\sigma^2 A + 2i\sigma A_t + \Delta A \big].
\]
Since \(A_{tt} = \Delta A\), we obtain
\[
\partial_t^2 u_{\mathrm{app}} - \Delta u_{\mathrm{app}} = e^{i\sigma\phi}(A_{tt} - \Delta A)=0.
\]

\medskip

\noindent\textbf{Step 2. Equation for the remainder and reduction via time integration.}
Let \(R = u - u_{\mathrm{app}}\). Then \(R\) satisfies zero initial and boundary conditions and
\begin{equation}\label{eq errid}
R_{tt} - \Delta R + p \partial_t^\alpha R + q R = -p \partial_t^\alpha u_{\mathrm{app}} - q u_{\mathrm{app}}:= F.
\end{equation}

Define the time-integrated remainder \(\mathcal{I}R(x,t) = \int_0^t R(x,\tau)\,d\tau\). Since \(R(x,0)=0\), we have \(R = \partial_t (\mathcal{I}R)\) and \(R_t = \partial_t^2 (\mathcal{I}R)\). Integrating the equation \eqref{eq errid} in time  and using the zero initial conditions gives
\[
R_t - \Delta (\mathcal{I}R) + p(k_\alpha * R) + q(\mathcal{I}R) = \mathcal{I}F,
\]
where we used \(\mathcal{I}(\partial_t^\alpha R) = k_\alpha * R\). Note that \(k_\alpha * \partial_t (\mathcal{I}R) = \partial_t^\alpha (\mathcal{I}R)\) because \(\mathcal{I}R(x,0) = 0\). Therefore,
\[
\partial_t^2 (\mathcal{I}R) - \Delta (\mathcal{I}R) + p \partial_t^\alpha (\mathcal{I}R) + q (\mathcal{I}R) = \mathcal{I}F,
\]
with zero initial conditions \(\mathcal{I}R(x,0)=0\), \(\partial_t (\mathcal{I}R)(x,0)=R(x,0)=0\), and zero boundary condition \(\mathcal{I}R|_\Sigma = 0\). By Theorem~\ref{thm wellp}, one obtains
\[
\|\mathcal{I}R\|_{L^\infty(0,T;H_0^1(\Omega))} + \|\partial_t (\mathcal{I}R)\|_{L^\infty(0,T;L^2(\Omega))} \le C \|\mathcal{I}F\|_{L^2(Q)}.
\]
Consequently,
\begin{equation}\label{eq rtr}
\|R\|_{L^2(Q)} \le C \|\mathcal{I}F\|_{L^2(Q)}, \quad
\|\partial_t R\|_{L^2(Q)} \le C \|F\|_{L^2(Q)}.
\end{equation}
Thus, to obtain the desired bounds for \(\|R\|_{L^2(Q)}\) and \(\|\partial_t R\|_{L^2(Q)}\) it remains to estimate \(\|\mathcal{I}F\|_{L^2(Q)}\) and \(\|F\|_{L^2(Q)}\), respectively.  Recalling that \(F = -p\partial_t^\alpha u_{\mathrm{app}} - q u_{\mathrm{app}}\), we have
\[
\mathcal{I}F = -\mathcal{I}(p\partial_t^\alpha u_{\mathrm{app}}) - \mathcal{I}(q u_{\mathrm{app}}), \quad
F = -p\partial_t^\alpha u_{\mathrm{app}} - q u_{\mathrm{app}}.
\]
We now estimate each term individually, starting with \(\mathcal{I}(q u_{\mathrm{app}})\).

\medskip
\noindent\textbf{Step 3. Estimate of \(\mathcal{I}(q u_{\rm app})\).}
Direct calculation gives
\[
\mathcal{I}(q u_{\mathrm{app}})(x,t) = q(x) \int_0^t e^{i\sigma\phi(x,\tau)} A(x,\tau)\,d\tau = q(x) e^{i\sigma x\cdot\omega} \int_0^t e^{i\sigma\tau} A(x,\tau)\,d\tau.
\]
Integrate by parts twice, using \(A(x,0)=0\) and \(A_t(x,0)=0\):
\[
\int_0^t e^{i\sigma\tau} A\,d\tau = \frac{e^{i\sigma t}}{(i\sigma)^2} A_t(x,t) - \frac{1}{(i\sigma)^2} \int_0^t e^{i\sigma\tau} A_{\tau\tau}\,d\tau.
\]
Consequently
\begin{equation}\label{eq quapp}
\|\mathcal{I}(q u_{\mathrm{app}})\|_{L^2(Q)} \le C\sigma^{-2}.
\end{equation}

\medskip

\noindent\textbf{Step 4. Estimate of \(\mathcal{I}(p\partial_t^\alpha u_{\mathrm{app}})\).}
Since \(u_{\mathrm{app}}(x,0)=0\), we have \(\mathcal{I}(\partial_t u_{\mathrm{app}}) = u_{\mathrm{app}}\). Using the commutativity of convolution and the integral operator,
\[
\mathcal{I}(p\partial_t^\alpha u_{\mathrm{app}}) = p \, \mathcal{I}(k_\alpha * \partial_t u_{\mathrm{app}}) = p \, k_\alpha * \mathcal{I}(\partial_t u_{\mathrm{app}}) = p \, k_\alpha * u_{\mathrm{app}}.
\]

Fix \(x \in \Omega\). By definition of \(u_{\mathrm{app}}\) and the change of variable \(s = t-\tau\),
\[
(k_\alpha * u_{\mathrm{app}})(x,t) = \int_0^t k_\alpha(t-\tau) e^{i\sigma(x\cdot\omega + \tau)} A(x,\tau) \, d\tau
= e^{i\sigma(x\cdot\omega + t)} \int_0^t k_\alpha(s) e^{-i\sigma s} A(x,t-s) \, ds.
\]
Denote the integral by
\[
J(t) = \int_0^t k_\alpha(s) e^{-i\sigma s} A(x,t-s) \, ds.
\]
Take \(\delta_1 = \sigma^{-1}\). If \(t\le \delta_1\), then \(|J(t)|\le C\int_0^{t} s^{-\alpha} ds = C\frac{t^{1-\alpha}}{1-\alpha}\le C\sigma^{\alpha-1}\). If \(t> \delta_1\), decompose the integral into \([0,\delta_1]\) and \([\delta_1, t]\).

Since \(|e^{-i\sigma s}| = 1\) and \(|A(x,t-s)| \le M := \sup|A|\),
\[
\begin{aligned}
\Bigl| \int_0^{\delta_1} k_\alpha(s) e^{-i\sigma s} A(x,t-s) \, ds \Bigr|
&\le M \int_0^{\delta_1} \frac{s^{-\alpha}}{\Gamma(1-\alpha)} \, ds
= \frac{M}{\Gamma(1-\alpha)} \cdot \frac{\delta_1^{1-\alpha}}{1-\alpha} \\
&\le \frac{M}{\Gamma(1-\alpha)(1-\alpha)} \sigma^{-(1-\alpha)} = C_1 \sigma^{\alpha-1}.
\end{aligned}
\]
On \([\delta_1, t]\), integration by parts gives
\[
\int_{\delta_1}^t k_\alpha(s) e^{-i\sigma s} A(x,t-s) \, ds
= \underbrace{\left[ \frac{e^{-i\sigma s}}{-i\sigma} k_\alpha(s) A(x,t-s) \right]_{s=\delta_1}^{s=t}}_{=:J_1}
+ \underbrace{\int_{\delta_1}^t \frac{e^{-i\sigma s}}{i\sigma} \frac{d}{ds}\big( k_\alpha(s) A(x,t-s) \big) \, ds}_{=:J_2}.
\]

For the boundary term \(J_1\): at \(s = t\) we have \(\frac{e^{-i\sigma t}}{-i\sigma} k_\alpha(t) A(x,0)=0\) because \(A(x,0)=0\). At \(s = \delta_1\) we have \(\frac{e^{-i\sigma \delta_1}}{-i\sigma} k_\alpha(\delta_1) A(x,t-\delta_1)\). Its magnitude is
\[
\frac{1}{\sigma} \cdot \frac{\delta_1^{-\alpha}}{\Gamma(1-\alpha)} \cdot M = \frac{M}{\Gamma(1-\alpha)} \sigma^{-1} \sigma^{\alpha} = C_2 \sigma^{\alpha-1}.
\]
Thus \(|J_1| \le C_1 \sigma^{\alpha-1}\) (with a generic constant \(C_1\)). For the integral term \(J_2\),
\[
\frac{d}{ds}\big( k_\alpha(s) A(x,t-s) \big) = k_\alpha'(s) A(x,t-s) - k_\alpha(s) A_t(x,t-s),
\]
where \(k_\alpha'(s) = -\alpha s^{-\alpha-1} / \Gamma(1-\alpha)\). Hence
\[
|J_2| \le \frac{1}{\sigma} \int_{\delta_1}^t \big( |k_\alpha'(s)| |A(x,t-s)| + |k_\alpha(s)| |A_t(x,t-s)| \big) \, ds.
\]
Let \(M_1 = \sup |A_t|\). Then
\[
\begin{aligned}
|J_2| &\le \frac{1}{\sigma} \left( \frac{\alpha M}{\Gamma(1-\alpha)} \int_{\delta_1}^t s^{-\alpha-1} \, ds + \frac{M_1}{\Gamma(1-\alpha)} \int_{\delta_1}^t s^{-\alpha} \, ds \right)\\
&=\frac{1}{\sigma} \left( \frac{\alpha M}{\Gamma(1-\alpha)} \frac{\delta_1^{-\alpha} - t^{-\alpha}}{\alpha} + \frac{M_1}{\Gamma(1-\alpha)}\frac{t^{1-\alpha} - \delta_1^{1-\alpha}}{1-\alpha}\right)\\
&\le \frac{M}{\Gamma(1-\alpha)} \sigma^{\alpha-1} + \frac{M_1 T^{1-\alpha}}{\Gamma(1-\alpha)(1-\alpha)} \sigma^{-1}.
\end{aligned}
\]
Since \(\sigma^{-1} \le \sigma^{\alpha-1}\) for large \(\sigma\), there exists a constant \(C_2\) such that \(|J_2| \le C_2 \sigma^{\alpha-1}\).

Collecting the above results, for every \(t \in [0,T]\),
\[
|(k_\alpha * u_{\mathrm{app}})(x,t)| = |e^{i\sigma(x\cdot\omega + t)} J(t)| = |J(t)| \le (C_1 + C_2) \sigma^{\alpha-1} = C_0 \sigma^{\alpha-1},
\]
where \(C_0=C_1+C_2\) is a constant. For fixed \(x\),
\[
\|(k_\alpha * u_{\mathrm{app}})(x,\cdot)\|_{L^2(0,T)}^2 = \int_0^T |(k_\alpha * u_{\mathrm{app}})(x,t)|^2 \, dt \le \int_0^T (C_0 \sigma^{\alpha-1})^2 \, dt = C_0^2 T \sigma^{2(\alpha-1)}.
\]
The bound is independent of \(x\); integrating over \(\Omega\) yields
\[
\|k_\alpha * u_{\mathrm{app}}\|_{L^2(Q)}^2 = \int_\Omega \|(k_\alpha * u_{\mathrm{app}})(x,\cdot)\|_{L^2(0,T)}^2 \, dx \le C_0^2 T |\Omega| \sigma^{2(\alpha-1)}.
\]
Taking the square root,
\begin{equation}\label{eq kuapp}
\|k_\alpha * u_{\mathrm{app}}\|_{L^2(Q)} \le C \sigma^{\alpha-1}, \quad C = C_0 \sqrt{T|\Omega|}.
\end{equation}
From \eqref{eq rtr}, \eqref{eq quapp} and \eqref{eq kuapp} we get
\[
\|R\|_{L^2(Q)} \le C\|\mathcal{I}F\|_{L^2(Q)} \le C \sigma^{\alpha-1}.
\]

\medskip
\noindent\textbf{Step 5. Estimate of \(\partial_t^\alpha u_{\rm app}\).}
Having obtained the desired bound for \(\|R\|_{L^2(Q)}\), we now turn to the estimate of \(\|\partial_t R\|_{L^2(Q)}\), which according to \eqref{eq rtr} is controlled by \(\|F\|_{L^2(Q)}\).  Since \(F = -p\partial_t^\alpha u_{\mathrm{app}} - q u_{\mathrm{app}}\) and \(|u_{\mathrm{app}}|\le 1\) implies \(\|q u_{\mathrm{app}}\|_{L^2(Q)}\le C\), the main task is to bound \(\|\partial_t^\alpha u_{\mathrm{app}}\|_{L^2(Q)}\). Recall that
\[
\partial_t u_{\mathrm{app}}(x,t) = e^{i\sigma(x\cdot\omega+t)}\big(i\sigma A(x,t) + A_t(x,t)\big)
=: e^{i\sigma x\cdot\omega} e^{i\sigma t} B(x,t),
\]
with \(B(x,t) = i\sigma A(x,t) + A_t(x,t)\).  The Caputo derivative is
\[
\partial_t^\alpha u_{\mathrm{app}}(x,t)
= \frac{1}{\Gamma(1-\alpha)}\int_0^t (t-s)^{-\alpha} \partial_t u_{\mathrm{app}}(x,s)\,ds
= \frac{e^{i\sigma x\cdot\omega}}{\Gamma(1-\alpha)}
\int_0^t (t-s)^{-\alpha} e^{i\sigma s} B(x,s)\,ds.
\]
We estimate the integral
\[
I(t) := \int_0^t (t-s)^{-\alpha} e^{i\sigma s} B(x,s)\,ds
\]
uniformly in \(x\) and \(t\). Note that \(B(x,0)=0\) because \(A(x,0)=0\) and \(A_t(x,0)=0\). The function \(B\) satisfies
\[
|B(x,s)| = |i\sigma A(x,s) + A_t(x,s)| \le \sigma|A(x,s)| + |A_t(x,s)| \le C\sigma
\]
for large \(\sigma\). Moreover,
\[
B_s(x,s) = \frac{\partial}{\partial s} \big( i\sigma A(x,s) + A_t(x,s) \big)
= i\sigma A_t(x,s) + A_{tt}(x,s).
\]

Since \(\theta_\epsilon\in C^\infty(\mathbb{R})\) is constant on
\((-\infty,0]\) and on \([\epsilon,\infty)\), its derivatives are supported in the transition interval \([0,\epsilon]\). In particular, \(A_t\) and \(A_{tt}\) are uniformly bounded, independently of \(\sigma\). Thus 
\[
|A_t(x,s)| \le C, \quad |A_{tt}(x,s)| \le C,
\]
and consequently
\[
|B_s(x,s)| \le \sigma|A_t(x,s)| + |A_{tt}(x,s)| \le C\sigma + C \le C\sigma
\]
for large \(\sigma\).

Set \(\delta_2 = \sigma^{-1}\). If \(t \le \delta_2\), then
\[
I(t) = \int_0^{t} (t-s)^{-\alpha} e^{i\sigma s} B(x,s)\,ds \le C\sigma \int_0^{t} (t-s)^{-\alpha}\,ds \le C\sigma \frac{t^{1-\alpha}}{1-\alpha} \le C\sigma^{\alpha}.
\]
If \(t > \delta_2\), we split the integral:
\[
I(t) = \int_0^{t-\delta_2} (t-s)^{-\alpha} e^{i\sigma s} B(x,s)\,ds
+ \int_{t-\delta_2}^t (t-s)^{-\alpha} e^{i\sigma s} B(x,s)\,ds
=: I_1 + I_2.
\]
For \(I_2\), since \(|B|\le C\sigma\),
\[
|I_2| \le C\sigma \int_{t-\delta_2}^t (t-s)^{-\alpha}\,ds = C\sigma \frac{\delta_2^{1-\alpha}}{1-\alpha} \le C\sigma^\alpha.
\]
On the interval \([0, t-\delta_2]\), integration by parts yields
\[
I_1 = \frac{1}{i\sigma}\Big[ (t-s)^{-\alpha} e^{i\sigma s} B(x,s) \Big]_{s=0}^{s=t-\delta_2}
- \frac{1}{i\sigma}\int_0^{t-\delta_2} e^{i\sigma s}\frac{\partial}{\partial s}\Big((t-s)^{-\alpha} B(x,s)\Big)\,ds.
\]
The boundary term at \(s=0\) vanishes because \(B(x,0)=0\). At \(s=t-\delta_2\), its modulus is
\[
\frac{1}{\sigma}\, \delta_2^{-\alpha} |e^{i\sigma(t-\delta_2)} B(x,t-\delta_2)|
\le \frac{1}{\sigma}\, \sigma^\alpha\, C\sigma = C\sigma^\alpha.
\]

For the remaining integral, compute
\[
\frac{\partial}{\partial s}\Big((t-s)^{-\alpha} B(x,s)\Big)
= \alpha (t-s)^{-\alpha-1} B(x,s) + (t-s)^{-\alpha} \partial_s B(x,s).
\]
Using \(|B|\le C\sigma\) and \(|B_s|\le C\sigma\),
\[
\begin{aligned}
\Bigl|\int_0^{t-\delta_2} e^{i\sigma s}\frac{\partial}{\partial s}\Big((t-s)^{-\alpha} B(x,s)\Big)\,ds\Bigr|
&\le \int_0^{t-\delta_2} \Big( \alpha (t-s)^{-\alpha-1} |B| + (t-s)^{-\alpha} |\partial_s B| \Big) ds\\
&\le C\sigma \int_0^{t-\delta_2} \Big( \alpha (t-s)^{-\alpha-1} + (t-s)^{-\alpha} \Big) ds.
\end{aligned}
\]
The first integral gives
\[
\int_0^{t-\delta_2} (t-s)^{-\alpha-1} ds = \frac{1}{\alpha}\big(\delta_2^{-\alpha} - t^{-\alpha}\big) \le C\sigma^{\alpha},
\]
and the second is
\[
\int_0^{t-\delta_2} (t-s)^{-\alpha} ds = \frac{1}{1-\alpha}\big(t^{1-\alpha} - \delta_2^{1-\alpha}\big) \le C.
\]
Hence the whole expression is bounded by \(C\sigma(\sigma^{\alpha}+1)\). Multiplying by \(\frac{1}{\sigma}\) yields
\[
\frac{1}{\sigma} \cdot C\sigma(\sigma^{\alpha}+1) \le C(\sigma^\alpha+1) \le C\sigma^\alpha \quad (\sigma\ge 1).
\]
Collecting all parts, we have \(|I_1| \le C\sigma^\alpha\) and \(|I_2| \le C\sigma^\alpha\). Thus 
\[
|\partial_t^\alpha u_{\mathrm{app}}(x,t)| \le \frac{C}{\Gamma(1-\alpha)} \sigma^\alpha,
\]
and integrating over \(Q\) gives \(\|\partial_t^\alpha u_{\mathrm{app}}\|_{L^2(Q)} \le C\sigma^\alpha\).

With these estimates, we obtain
\[
\|\partial_t R\|_{L^2(Q)} \le C\|F\|_{L^2(Q)} \le C\bigl(\|p\|_{L^\infty(\Omega)}\|\partial_t^\alpha u_{\mathrm{app}}\|_{L^2(Q)} + \|q u_{\mathrm{app}}\|_{L^2(Q)}\bigr) \le C\sigma^\alpha.
\]
Together with the bound \(\|R\|_{L^2(Q)} \le C\sigma^{\alpha-1}\) already established in Step~4, this completes the proof of Lemma~\ref{lem cgo}.
\end{proof}

\begin{proof}[\textbf{Proof of Theorem~\ref{thm uniqueness}}]

The proof proceeds in five steps. We first introduce the high-frequency approximations and insert them into the integral identity. We then estimate the error terms \(\mathcal{E}_p(t)\) and \(\mathcal{E}_q(t)\). After that, we compute the leading asymptotics of the principal terms. Finally, we extract the Fourier transforms of the coefficient differences \(p\) and \(q\), and show that they vanish identically.

\medskip

\noindent\textbf{Step 1. CGO construction and expansion of the identity.}
Following the strategy outlined above, we construct high-frequency beam solutions adapted to the fractional wave operator and substitute them into the integral identity \eqref{eq iduv0}. 

Let \(\xi\in\mathbb{R}^n\) be fixed. For sufficiently large \(\sigma>0\), choose unit vectors \(\omega_1,\omega_2\in\mathbb{S}^{n-1}\) satisfying \(\omega_1+\omega_2=-\frac{\xi}{\sigma}\). This is possible whenever \(\sigma>|\xi|/2\). Define the beam solutions
\[
u_{\rm app}(x,t)=e^{i\sigma\phi_1(x,t)}A_1(x,t),
\qquad
v_{\rm app}(x,t)=e^{i\sigma\phi_2(x,t)}A_2(x,t),
\]
where
\[
\phi_j(x,t)=x\cdot\omega_j+t,
\qquad
A_j(x,t)=\theta_\epsilon(\phi_j(x,t)-r),
\qquad j=1,2.
\]
Let \(u\) and \(v\) be the corresponding exact solutions of \eqref{eq1.1} with boundary data \(u_{\rm app}|_\Sigma\), \(v_{\rm app}|_\Sigma\),
respectively, and with zero initial conditions. Set
\[
R_u=u-u_{\rm app},
\qquad
R_v=v-v_{\rm app}.
\]
By Lemma~\ref{lem cgo}, these remainders satisfy
\[
\|R_u\|_{L^2(Q)} + \|R_v\|_{L^2(Q)} \le C \sigma^{\alpha-1}, \qquad
\|\partial_t R_u\|_{L^2(Q)} + \|\partial_t R_v\|_{L^2(Q)} \le C \sigma^{\alpha},
\]
and vanish on the lateral boundary: \(R_u|_{\Sigma}=R_v|_{\Sigma}=0\).

Substituting \(u = u_{\mathrm{app}} + R_u\), \(v = v_{\mathrm{app}} + R_v\) into the integral identity \eqref{eq iduv0} yields
\begin{equation}\label{eq uvespesp}
\int_{\Omega} p \, k_\alpha * \partial_t (u_{\mathrm{app}}*v_{\mathrm{app}}) \, dx + \int_{\Omega} q \, u_{\mathrm{app}}*v_{\mathrm{app}} \, dx + \mathcal{E}_p(t)+ \mathcal{E}_q(t) = 0,    
\end{equation}
where
\[
\mathcal{E}_p(t)
:=
\int_\Omega p\,k_\alpha*\partial_t
\bigl(
u_{\rm app}*R_v
+
R_u*v_{\rm app}
+
R_u*R_v
\bigr)\,dx,
\]
and
\[
\mathcal{E}_q(t)
:=
\int_\Omega q
\bigl(
u_{\rm app}*R_v
+
R_u*v_{\rm app}
+
R_u*R_v
\bigr)\,dx.
\]

\medskip

\noindent\textbf{Step 2. Estimate of \(\mathcal{E}_p(t)\).}
We first estimate the error term \(\mathcal{E}_p(t)\). Consider the term involving \(u_{\rm app}*R_v\). Since \(u_{\rm app}\) and \(R_v\) vanish at \(t=0\), the differentiation rule for convolution gives
\[
\partial_t(u_{\rm app}*R_v)
=
(\partial_tu_{\rm app})*R_v
=
u_{\rm app}*\partial_tR_v.
\]
In particular, we use
\[
k_\alpha*\partial_t(u_{\rm app}*R_v)
=
(k_\alpha*\partial_tu_{\rm app})*R_v.
\]

For the fixed \(x\in\Omega\), by the
Cauchy-Schwarz inequality and Young inequality,
\begin{equation*}
\begin{aligned}
|(k_{\alpha}*\partial_t u_{\rm app})*R_v|
&\le \int_0^t |(k_\alpha*\partial_t u_{\rm app})(x,t-\tau) R_v(x,\tau)|\,d\tau\\
&\le \|k_{\alpha}*\partial_t u_{\rm app}(x,\cdot)\|_{L^2(0,T)} \|R_v(x,\cdot)\|_{L^2(0,T)}\\[2pt]
&\le C \|\partial_t u_{\rm app}(x,\cdot)\|_{L^2(0,T)} \|R_v(x,\cdot)\|_{L^2(0,T)}.
\end{aligned}    
\end{equation*}
According to Lemma \ref{lem cgo}, one obtains \(\|\partial_t u_{\rm app}\|_{L^2(Q)}\le C\sigma^{\alpha}\) and  \(\|R_v\|_{L^2(Q)}\le C\sigma^{\alpha-1}\). Integrating over \(\Omega\) and using the Cauchy-Schwarz inequality yields
\[
\begin{aligned}
\int_\Omega |(k_{\alpha}*\partial_t u_{\rm app})*R_v|\,dx
&\le C\int_\Omega \| \partial_t u_{\rm app}(x,\cdot) \|_{L^2(0,T)} \| R_v(x,\cdot) \|_{L^2(0,T)} \,dx \\
&\le C \| \partial_t u_{\rm app} \|_{L^2(Q)} \| R_v \|_{L^2(Q)}\le C \sigma^\alpha \cdot \sigma^{\alpha-1} = C \sigma^{2\alpha-1},
\end{aligned}
\]
where \(C\) is independent of \(t\) and \(\sigma\). Both the term with \(R_u*v_{\rm app}\) and the term with \(R_u*R_v\) are handled in exactly the same way, yielding
\begin{equation*}\label{eq espp2}
\Bigl|\int_\Omega p\,k_\alpha*\partial_t(R_u*v_{\rm app})\,dx\Bigr|\le C\sigma^{2\alpha-1}, \quad
\Bigl|\int_\Omega p\,k_\alpha*\partial_t(R_u*R_v)\,dx\Bigr|
\le C\sigma^{2\alpha-1}.
\end{equation*}
Combining the above results, we obtain \(|\mathcal{E}_p(t)| \le C \sigma^{2\alpha-1}\) for \(t \in [0,T]\).

\medskip

\noindent\textbf{Step 3.  Estimate of \(\mathcal{E}_q(t)\).}
Next, we estimate the error terms \( \mathcal{E}_q(t)\). For \(t\in[0,T]\), using the definition of convolution and the bound \(|u_{\mathrm{app}}|\le C\), we get
\[
|u_{\mathrm{app}}*R_v|
\le C\int_0^t |R_v(x,\tau)|\,d\tau
\le C\sqrt{T}\,\|R_v(x,\cdot)\|_{L^2(0,T)} .
\]
Hence,
\[
\begin{aligned}
\Bigl|\int_\Omega q \,(u_{\mathrm{app}}*R_v)\,dx\Bigr|
&\le \|q\|_{L^\infty(\Omega)} \int_\Omega |(u_{\mathrm{app}}*R_v)|\,dx \\
&\le C \int_\Omega \|R_v(x,\cdot)\|_{L^2(0,T)}\,dx \\
&\le  C \|R_v\|_{L^2(Q)} \le C\sigma^{\alpha-1}.
\end{aligned}
\]
where the constant does not depend on \(t\) and \(\sigma\). The same argument applies to the term \(R_u*v_{\mathrm{app}}\), and one obtains
\[
\Bigl| \int_{\Omega} q \, R_u*v_{\mathrm{app}} \, dx\Bigr| \le C \sigma^{\alpha-1}.
\]
For the term involving \(R_u*R_v\), by the Cauchy–Schwarz inequality,
\[
\begin{aligned}
\Bigl|\int_\Omega q \,(R_u*R_v)\,dx\Bigr|
&\le \|q\|_{L^\infty(\Omega)} \int_\Omega |(R_u*R_v)|\,dx \\
&\le C \int_\Omega \|R_u(x,\cdot)\|_{L^2(0,T)} \|R_v(x,\cdot)\|_{L^2(0,T)}\,dx \\
&\le C\|R_u\|_{L^2(Q)} \|R_v\|_{L^2(Q)}\le C \sigma^{\alpha-1}\cdot\sigma^{\alpha-1} \le C\sigma^{\alpha-1}
\end{aligned}
\]
for large \(\sigma\). Combining the above results, \(|\mathcal{E}_q (t)|\leq C\sigma^{\alpha-1}\) for \(t\in [0,T]\).

\medskip

\noindent\textbf{Step 4. Asymptotic expansion of the principal terms.} With the error terms \(\mathcal{E}_p(t)\) and \(\mathcal{E}_q(t)\) bounded as in Steps~2 and~3, we now turn to the asymptotic analysis of the principal terms in \eqref{eq uvespesp}. According to the definition of the beam solutions
\[
u_{\rm app} = e^{i\sigma(x\cdot\omega_1 + t)}A_1(x,t),\quad
v_{\rm app} = e^{i\sigma(x\cdot\omega_2 + t)}A_2(x,t),
\]
with \(\omega_1+\omega_2 = -\xi/\sigma\). A direct computation of the convolution gives
\begin{equation}\label{eq uv}
u_{\rm app}*v_{\rm app}(x,t) = e^{-i\xi\cdot x} e^{i\sigma t} \gamma(x,t),
\end{equation}
where
\[
\gamma(x,t) = \int_0^t A_1(x,t-\tau)A_2(x,\tau)\,d\tau.
\]
Differentiating in \(t\) yields
\[
\partial_t(u_{\rm app}*v_{\rm app}) = e^{-i\xi\cdot x} e^{i\sigma t} \big(i\sigma \gamma(x,t) + \gamma_t(x,t)\big),
\]
and hence
\[
k_\alpha*\partial_t(u_{\rm app}*v_{\rm app})
= e^{-i\xi\cdot x} \int_0^t k_\alpha(t-\tau) e^{i\sigma \tau} \big(i\sigma \gamma(x,\tau) + \gamma_t(x,\tau)\big) d\tau.
\]
Changing variables \(s = t - \tau\), we write
\[
k_\alpha*\partial_t(u_{\rm app}*v_{\rm app}) = e^{-i\xi\cdot x} e^{i\sigma t} h(t), \quad
h(t) = \int_0^t k_\alpha(s) e^{-i\sigma s} f(s)\,ds,
\]
with
\[
f(s) = i\sigma \gamma(x,t-s) + \gamma_t(x,t-s).
\]

For \(t \in (2r+2\epsilon,T)\), the cut-off amplitudes satisfy \(A_1 = A_2 = 1\) in \(\Omega\), so that
\[
\gamma(x,t-s) = (t-s) - t_0(x), \quad \gamma_t(x,t-s) = 1,
\]
where \(t_0(x)\) is a bounded function independent of \(t\). Therefore,
\[
f(s) = i\sigma ((t-s) - t_0(x)) + 1 = i\sigma(t - t_0(x)) + 1 - i\sigma s.
\]

To evaluate the integrals asymptotically for large \(\sigma\), we extend the upper limit to infinity and control the tail by integration by parts.  For any \(\beta>-1\) and \(t>0\),
\[
\int_0^t s^{\beta} e^{-i\sigma s}\,ds
= \int_0^{\infty} s^{\beta} e^{-i\sigma s}\,ds - \int_t^{\infty} s^{\beta} e^{-i\sigma s}\,ds
= \Gamma(\beta+1)(i\sigma)^{-\beta-1} - \int_t^{\infty} s^{\beta} e^{-i\sigma s}\,ds.
\]
For the tail integral we integrate by parts once:
\[
\int_t^{\infty} s^{\beta} e^{-i\sigma s}\,ds
= \frac{t^{\beta} e^{-i\sigma t}}{i\sigma} + \frac{\beta}{i\sigma}\int_t^{\infty} s^{\beta-1} e^{-i\sigma s}\,ds
= O(\sigma^{-1}),
\]
since both terms on the right‑hand side are bounded by \(C\sigma^{-1}\) for \(\sigma\ge 1\) (the remaining integral converges absolutely because \(\beta-1>-2\) and the oscillatory factor ensures integrability at infinity).  Consequently,
\[
\int_0^t s^{\beta} e^{-i\sigma s}\,ds
= \Gamma(\beta+1)(i\sigma)^{-\beta-1} + O(\sigma^{-1}) \qquad \text{for } \beta>-1.
\]
Applying this with \(\beta = -\alpha\) and \(\beta = 1-\alpha\) gives
\[
\int_0^t k_\alpha(s) e^{-i\sigma s}\, ds = (i\sigma)^{\alpha-1} + O(\sigma^{-1}),\quad
\int_0^t s k_\alpha(s) e^{-i\sigma s}\, ds = (1-\alpha)(i\sigma)^{\alpha-2} + O(\sigma^{-1}).
\]
Substituting these expansions into the expression for \(h(t)\) yields
\[
\begin{aligned}
h(t) &= \big(i\sigma(t-t_0) + 1\big) \int_0^t k_\alpha(s) e^{-i\sigma s}\, ds - i\sigma \int_0^t s k_\alpha(s) e^{-i\sigma s}\, ds \\
&= (i\sigma)^\alpha (t-t_0) + \alpha (i\sigma)^{\alpha-1} + O(\sigma^{-1}).
\end{aligned}
\]
Finally, multiplying back by \(e^{-i\xi\cdot x} e^{i\sigma t}\) gives the asymptotic expansion, we arrive at
\begin{equation}\label{eq paruparv}
\begin{aligned}
k_\alpha*\partial_t(u_{\rm app}*v_{\rm app})(x,t)
= e^{-i\xi\cdot x}e^{i\sigma t}\Big[ (i\sigma)^\alpha(t-t_0(x)) + \alpha(i\sigma)^{\alpha-1} \Big]
+ O(\sigma^{-1}).
\end{aligned}
\end{equation}

\medskip

\noindent\textbf{Step 5.  Extraction of the Fourier coefficients and conclusion.}
With the asymptotic expansion of the principal terms established in Step~4 and the error bounds obtained in Steps~2--3, we are now in a position to substitute everything into the identity \eqref{eq uvespesp} and extract the Fourier coefficients of \(p\) and \(q\).

Substituting \eqref{eq uv} and \eqref{eq paruparv} into the identity \eqref{eq uvespesp} yields 
\begin{equation}\label{eq substituted}
\begin{aligned}
e^{i\sigma t}\int_\Omega e^{-i\xi\cdot x}
\Big[ p\big( (i\sigma)^\alpha(t-t_0) + \alpha(i\sigma)^{\alpha-1} \big)+ q (t-t_0) \Big]dx + \mathcal{E}_p(t) + \mathcal{E}_q(t) + O(\sigma^{-1}) = 0
\end{aligned}
\end{equation}
for all \(t\in (2r+2\epsilon,T)\), the error terms satisfy
\[
|\mathcal{E}_p(t)| \le C\sigma^{2\alpha-1}, \quad
|\mathcal{E}_q(t)| \le C\sigma^{\alpha-1},
\]
where the constants \(C\) are independent of \(\sigma\) and \(t\). 

Introduce the notation
\[
\hat p(\xi) = \int_\Omega p(x) e^{-i\xi\cdot x}dx,\quad
\hat q(\xi) = \int_\Omega q(x) e^{-i\xi\cdot x}dx,
\]
\[P(\xi) = \int_\Omega p(x) e^{-i\xi\cdot x}t_0(x)dx,\quad
Q(\xi) = \int_\Omega q(x) e^{-i\xi\cdot x}t_0(x)dx.
\]
Then \eqref{eq substituted} can be rewritten as
\begin{equation}\label{eq compact}
e^{i\sigma t}\Big[ (i\sigma)^\alpha\big(t\hat p - P\big) + \alpha(i\sigma)^{\alpha-1}\hat p + \big(t\hat q - Q\big) \Big]
= -\mathcal{E}_p(t)- \mathcal{E}_q(t) + O(\sigma^{-1}).
\end{equation}

We first recover \(p\). Dividing \eqref{eq compact} by \(\sigma^\alpha\), and letting \(\sigma\to\infty\). Since 
\[
\frac{|\mathcal{E}_p(t)|}{\sigma^\alpha} \le C\sigma^{\alpha-1} \rightarrow 0,\,\,\,\,
\frac{|\mathcal{E}_q(t)|}{\sigma^\alpha} \le C\sigma^{-1} \to 0, \,\,\,\,\frac{O(\sigma^{-1})}{\sigma^\alpha} = O(\sigma^{-\alpha-1}) \to 0,
\]
the right-hand side of \eqref{eq compact} tends to zero.  Multiplying by \(e^{-i\sigma t}\) and taking the limit \(\sigma\to\infty\),
\[
i^\alpha\big(t\hat p(\xi) - P(\xi)\big) = 0 \quad t\in(2r+2\epsilon,T).
\]
Taking two distinct times \(t_1,t_2\in(2r+2\epsilon,T)\) and subtracting the corresponding identities. Because \(P\) does not depend on \(t\), it cancels and we get
\[
i^\alpha(t_1-t_2)\hat p(\xi)=0.
\]
Since \(t_1\ne t_2\), it follows that \(\hat p(\xi)=0\). As \(\xi\in\mathbb{R}^n\) was arbitrary, the Fourier transform of the zero extension of \(p\) vanishes identically. Hence \(p=0\) a.e. in \(\Omega\).

Having proved \(p=0\), all terms containing \(p\) disappear from the original identity.  In particular, \(\mathcal{E}_p(t)\equiv 0\) and the principal terms involving \(p\) vanish.  \eqref{eq compact} reduces to
\[
e^{i\sigma t}\big(t\hat q(\xi) - Q(\xi)\big) = -\mathcal{E}_q(t) + O(\sigma^{-1}).
\]
Multiplying by \(e^{-i\sigma t}\) and using the bound \(|\mathcal{E}_q(t)|\le C\sigma^{\alpha-1}\) gives
\[
t\hat q(\xi) - Q(\xi) = -e^{-i\sigma t}\mathcal{E}_q(t) + O(\sigma^{-1})
= O(\sigma^{\alpha-1}) + O(\sigma^{-1}) = O(\sigma^{\alpha-1}).
\]
Taking again two distinct times \(t_3,t_4\in(2r+2\epsilon,T)\) and subtracting the two identities; the term \(Q\) cancels and let \(\sigma\to\infty\),
\[
(t_3-t_4)\hat q(\xi) = O(\sigma^{\alpha-1}) \rightarrow 0,
\]
which forces \(\hat q(\xi) = 0\) for all \(\xi\in\mathbb{R}^n\). The Fourier transform of the zero-extension of \(q\) is zero, yielding \(q=0\) a.e. in \(\Omega\). This completes the proof of Theorem \ref{thm uniqueness}.
\end{proof}


\section{Concluding remarks}\label{sec4}

In this work, we first established the well-posedness of the forward problem for boundary data that are sufficiently smooth to allow the construction of a suitable lifting function. Building on this foundation, we investigated the inverse problem of determining the two spatially dependent coefficients \(p(x)\) and \(q(x)\) in a time-fractional damped wave equation from the DtN map. By constructing high-frequency beam solutions concentrating along straight lines and exploiting a convolution-type integral identity that eliminates the unknown boundary traces, we proved that the DtN map uniquely determines both coefficients. The argument combines the well-posedness theory, the construction of such special solutions, and a careful remainder estimate based on oscillatory integration by parts to control the error of the geometric-optics approximation. Consequently, equality of the DtN maps implies that the Fourier transforms of the coefficient differences vanish identically, yielding \(p_1=p_2\) and \(q_1=q_2\) almost everywhere in \(\Omega\).

Our results extend classical uniqueness theorems for hyperbolic inverse problems to the fractional-order setting, providing a rigorous theoretical foundation for applications where memory and viscoelastic effects are significant. Natural directions for future research include extensions to anisotropic coefficients, multi-term fractional equations, partial boundary measurements, and quantitative stability analysis of the inverse map.


\section*{Acknowledgements}
The author would like to express their sincere gratitude to Professors Yavar Kian and Zhiyuan Li for their invaluable guidance and helpful suggestions throughout the preparation of this work.


\bibliographystyle{unsrt}

\end{document}